%
%

\documentclass[11pt]{amsart}
\usepackage{ryanmacros}

\usepackage{tikz}
\usetikzlibrary{matrix,arrows,backgrounds,shapes.misc,shapes.geometric,patterns,calc,positioning}

\usepackage[colorlinks=true, pdfstartview=FitV, linkcolor=blue, citecolor=blue, urlcolor=blue]{hyperref}

\usepackage{fullpage}


\DeclareMathOperator{\Aut}{Aut}

\DeclareMathOperator{\gr}{gr}
\newcommand{\keyw}[1]{\emph{#1}}
\newcommand{\za}{\alpha}
\newcommand{\zb}{\beta}
\newcommand{\zg}{\gamma}

\newcommand{\FF}{\mathbb{F}}

\newcommand{\dd}{\underline{d}}
\newcommand{\usg}{\widetilde{S_g}}

\newcommand{\twobytwo}[4]{\left(\begin{matrix}
					 #1 & #2 \\ #3 & #4
					 \end{matrix}\right)}

\begin{document}
\title{Tree modules and counting polynomials}
\author{Ryan Kinser}
\address{Department of Mathematics, Northeastern University, Boston, MA}
\email{r.kinser@neu.edu}

\begin{abstract}
We give a formula for counting tree modules for the quiver $S_g$ with $g$ loops and one vertex in terms of tree modules on its universal cover.  This formula, along with work of Helleloid and Rodriguez-Villegas, is used to show that the number of $d$-dimensional tree modules for $S_g$ is polynomial in $g$ with the same degree and leading coefficient as the counting polynomial $A_{S_g}(d, q)$ for absolutely indecomposables over $\mathbb{F}_q$, evaluated at $q=1$.
\end{abstract}
\maketitle


\section{Introduction}\label{sect:intro}

The goal of the paper is to establish a connection between two apparently unrelated objects associated to a quiver: its tree modules, and the polynomials introduced by Kac which count certain representations over finite fields.  The background on these is reviewed in the following section.  Here we focus on the ``$g$-loop quiver'' $S_g$ for simplicity; the work may generalize to other quivers but the calculations of the main theorem are not as straightforward.  Denote by $A_{S_g}(d, q)$ the polynomial counting isomorphism classes of  $d$-dimensional absolutely indecomposable representations of $S_g$ over $\mathbb{F}_q$.

Motivated by the work of Hausel and Rodriguez-Villegas \cite{MR2453601}, Helleloid and Rodriguez-Villegas obtained a formula for the evaluation of $A_{S_g}(d, q)$ at $q=1$ \cite{MR2543630}; in particular, they found that it is a polynomial in $g$ whose leading term can be calculated by enumeration of labeled trees.  
It is natural to try to connect this evaluation at $q=1$ back to the representation theory of $S_g$.

While there is not currently any simple interpretation of $A_{S_g}(d,q)$ at $q=1$, tree modules are representations whose structure is completely presented by a finite, labeled tree, and thus can be regarded as combinatorial in some sense.  (In this paper, we use the term ``tree modules'' as in \cite{Ringel:1998gf}, which is less restrictive than, say, \cite{Crawley-Boevey:1989rr}.) Our main theorem utilizes covering theory of quivers to relate the number of $d$-dimensional tree modules for $S_g$ to $A_{S_g}(d, 1)$.

\begin{theoremnonum}
The number of tree modules of dimension $d$ for $S_g$ is a polynomial in $g$ with the same degree $d-1$ and same leading coefficient as $A_{S_g}(d, 1)$, namely
\begin{equation}
\sum_{Q \in \mathscr{Q}_d} \frac{1}{\#\Aut Q}= \frac{2^{d-1}d^{d-2}}{d!} .
\end{equation}
Here, $\mathscr{Q}_d$ is the set of tree quivers with $d$ vertices.
\end{theoremnonum}

In the final section, we give some examples that motivated this result, and pose questions.  Whenever a quasiprojective variety admits a counting polynomial, evaluation at $q=1$  gives its Euler characteristic in singular cohomology with compact support.  If this variety additionally admits an algebraic cell decomposition, this counts the number of cells.  In many examples, we have observed that the number of tree modules of a given dimension is greater than or equal to the evaluation at the relevant counting polynomial at $q=1$. This leads us to speculate on how our result may relate to a conjecture of Kac on cell decompositions of varieties of isomorphism classes of indecomposable representations of quivers.

\section{Background}\label{sect:back}
\subsection{Quiver representations}
We freely use the basic notions of quivers and their representations; in particular we interchangeably use the terminology of quiver representations and modules over the path algebra, since the categories are equivalent.  We only work with finite-dimensional representations throughout this paper.  The books \cite{assemetal,ARSartinalgebras} are standard references for this background.
For $g \in \Z_{\geq 0}$, we denote by $S_g$ the quiver with one vertex and $g$ loops, so that its path algebra $KS_g$ over a field $K$ is the free (associative) $K$-algebra on $g$ generators. 

A \keyw{quiver} $Q$ is written as a pair $Q=(Q_0, Q_1)$ where $Q_0$ is the vertex set of $Q$ and $Q_1$ the arrow set of $Q$.  For an arrow $\za \in Q_1$, we denote by $t\za$ the tail (starting point) of $\za$ and $h\za$ the head (ending point) of $\za$.  A \keyw{representation of $Q$} is given by associating a vector space $V_x$ to each $x\in Q_0$ and a linear map $V_\za \colon V_{t\za} \to V_{h\za}$ to each $\za \in Q_1$.

A \keyw{morphism between quivers} is a morphism of directed graphs: it sends vertices to vertices and arrows to arrows, preserving the head and tail of each arrow.  
More precisely, $f\colon Q' \to Q$ is given by maps $f_0\colon Q'_0 \to Q_0$ and $f_1\colon Q'_1 \to Q_1$ which satisfy
\[
f_0(t\za) = tf_1(\za) \qquad \text{and} \qquad f_0(h\za) = hf_1(\alpha)
\]
for all $\za \in Q_1$.

Fixing a base quiver $Q$, we say that a \keyw{quiver over $Q$} is a pair $(Q', f)$ consisting of another quiver $Q'$ and a structure map $f\colon Q' \to Q$.  We allow $Q'$ to have infinitely many vertices and arrows, but we still consider finite dimensional representations (in particular, our representations still have finite support).  The map $f$ induces functors
\[
f_* \colon \rep(Q') \to \rep(Q) \qquad \text{and} \qquad f^*\colon \rep(Q) \to \rep(Q')
\]
which we describe here.
To simplify the notation, we consider the maps $V_{\za}$ of a representation $V$ to be defined on the total vector space $\bigoplus_{x \in Q_0} V_x$ by taking $V_\za (v) = 0$ for $v \in V_y$, when $y \neq t\za$.
The \keyw{pullback} $f^* (W) \in \rep(Q')$ of a representation $W \in \repq$ along a morphism of quivers $f\colon Q' \to Q$ is given by
\[
f^*(W)_x := W_{f(x)} \qquad x \in Q'_0 \qquad \qquad f^*(W)_\za := W_{f(a)} \qquad \za \in Q'_1,
\]
and the \keyw{pushforward} $f_* (V) \in \repq$ of a representation $V \in \rep(Q')$ is given by
\[
f_*(V)_x := \bigoplus_{y \in f^{-1}(x)} V_y \qquad x \in Q_0 \qquad \qquad f_*(V)_\za := \sum_{\zb \in f^{-1}(\za)} V_\zb \qquad \za \in Q_1 .
\]

As a side remark, we mention that pushforward and pullback are not generally adjoint functors, but this can happen in special cases (see for example \cite[\S 2.1]{kinserrootedtrees}).

\subsection{Tree modules}
When $T$ is a tree quiver, that is, a quiver whose underlying graph is a tree, it has a unique (up to isomorphism) indecomposable representation $\mathbb{I}_T$ which is one-dimensional at each vertex; we may take the map over each arrow in $\mathbb{I}_T$ to be the identity.  Given an indecomposable representation $M$ of any quiver $Q$, we say that $M$ is a \keyw{tree module} if there exists $T \xto{f} Q$, with $T$ a tree quiver, such that $M \simeq f_*(\mathbb{I}_T)$.   In this case we say that $(T, f)$ is a \keyw{structure quiver} for $M$.
In general, the quiver $T$ is not unique nor is its underlying graph (see \cite[Rmk. 2]{Ringel:1998gf} for an example with $Q$ of type $D_4$). 

A more concrete formulation is that an indecomposable $M$ is a tree module if and only if it admits a basis $\mathcal{M}=\{m_1, m_2, \dotsc, m_d\}$ such that the collection of matrices presenting $M$ as a representation of $Q$ in this basis consist of ones and zeros, with precisely $d-1$ nonzero entries. 
Note that if $M$ admits a basis for which the corresponding matrices have strictly fewer than $d-1$ nonzero entries, then $M$ is decomposable (see, for example, \cite[Property 1]{Ringel:1998gf}).  So we can think of tree modules as indecomposable representations which can be presented as sparsely as possible.  The equivalence of the two definitions comes from identifying the basis vectors $\mathcal{M}$ with the vertices of $T$.


\subsection{Counting polynomials}\label{sect:countpoly}
We denote by $\FF_q$ the finite field with $q$ elements, and recall that a quiver representation over $\FF_q$ is \keyw{absolutely indecomposable} if it is indecomposable after extension of scalars to an algebraic closure $\overline{\FF_q}$.
For a fixed quiver $Q$ and dimension vector $\dd$, Kac proved that there is a polynomial with integer coefficients
\[
A_Q(\dd, q) = q^n + c_{n-1}q^{n-1} + \cdots + c_1 q + c_0 \in \Z[q]
\]
which counts the number of isomorphism classes of absolutely indecomposable $\FF_q$-representations of $Q$ of dimension vector $\dd$ \cite[Prop. 1.15]{MR718127}.
(We remark once and for all that there may be finitely many ``bad'' characteristics for which this is not actually true.)
Kac made several conjectures about the coefficients of these polynomials.  The first interprets the constant term $c_0$ as the dimension of the $\dd$-root space in the Kac-Moody algebra determined by $Q$ (when $Q$ has no loops), and the second conjecture predicts that all $c_i \geq 0$ (for any $Q$).
Both conjectures were proven for indivisible dimension vectors by Crawley-Boevey and van den Bergh \cite{MR2038196}, and the first conjecture was later proven by Hausel \cite{MR2651380} in full generality.
Kac proposes a third, even more bold conjecture, which roughly says that the number $c_i$ counts $i$-dimensional algebraic cells in a variety whose points parametrize isomorphism classes of indecomposable representations of $Q$ of dimension vector $\dd$ (see Section \ref{sect:examples}).

\subsection{Counting polynomials at $q=1$}
Effectively computing $A_Q(\dd, q)$ seems to be a difficult problem in general.  LeBruyn's work \cite{LeBruyn1986d} on the case $Q=S_g$  provided many examples that helped in exploring ideas of this paper; later the author became aware of a combinatorial formula by Jiuzhao Hua \cite{MR1752774} which is quite reasonable to implement for computer calculations.

Helleloid and Rodriguez-Villegas used Hua's formula to get some information on the evaluation $A_Q(\dd,q=1)$, for any quiver $Q$.  We just use a special case of their results when $Q=S_g$, as summarized in the following theorem (see Theorem 6.3 in \cite{MR2543630} and the paragraph following it).

\begin{theorem}[Helleloid and Rodriguez-Villegas]
For a fixed dimension $d$, we have that $A_{S_g}(d, q=1)$ is a polynomial in $g$ with leading term
\begin{equation}\label{eq:RVlead}
\frac{2^{d-1}\#\mathscr{T}_d}{d!}g^{d-1},
\end{equation}
where $\mathscr{T}_d$ is the set of (connected) trees on $d$ labeled vertices.
\end{theorem}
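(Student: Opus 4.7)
The plan is to translate the count of tree modules for $S_g$ into the count of isomorphism classes of arrow-labeled tree quivers, then extract the leading behavior via Burnside's lemma and a weighted Cayley formula.

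Since $S_g$ has a single vertex, a structure map $f\colon T \to S_g$ is the same data as an arrow labeling of $T$ by one of the $g$ loops. Via the universal cover $\usg$ (the $2g$-regular directed tree whose arrows are labeled by the loops), I would argue that $f_*(\mathbb{I}_T)$ is indecomposable precisely when $T$ embeds into $\usg$, which amounts to the local condition that at each vertex of $T$, no two arrows carry the same label pointing the same way. Moreover, two such arrow-labeled tree quivers yield isomorphic tree modules if and only if they are isomorphic as arrow-labeled quivers: an isomorphism of subtrees of $\usg$ extends uniquely to a deck transformation since the $F_g$-action on $\usg$ is free. Thus the count of tree modules equals the number of isomorphism classes of embeddable arrow-labeled tree quivers on $d$ vertices with arrow labels in $\{1,\dotsc,g\}$.

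For each underlying unlabeled $Q \in \mathscr{Q}_d$, Burnside's lemma gives the count of arrow-labelings modulo $\Aut Q$ as
\[
\frac{1}{|\Aut Q|} \sum_{\sigma \in \Aut Q} g^{\#(Q_1/\langle\sigma\rangle)}.
\]
Any nonidentity $\sigma$ acts faithfully on $Q_1$ (a tree automorphism fixing every arrow fixes every vertex of the connected tree), so has strictly fewer than $d-1$ arrow-orbits, contributing $O(g^{d-2})$. The embeddability constraint excludes only $O(g^{d-2})$ labelings by inclusion--exclusion on pairs of arrows at common vertices. Summing over $Q \in \mathscr{Q}_d$ yields
\[
\#\{\text{$d$-dim tree modules for }S_g\} = g^{d-1}\sum_{Q \in \mathscr{Q}_d} \frac{1}{|\Aut Q|} + O(g^{d-2}),
\]
which is polynomial in $g$ since each Burnside and inclusion-exclusion contribution is.

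Cayley's formula $\#\mathscr{T}_d = d^{d-2}$, together with the $2^{d-1}$ orientations per unoriented tree, gives $d^{d-2}\cdot 2^{d-1}$ vertex-labeled tree quivers on $[d]$; orbit--stabilizer partitions these into $\mathscr{Q}_d$-classes of size $d!/|\Aut Q|$, so $\sum_{Q \in \mathscr{Q}_d} |\Aut Q|^{-1} = \frac{d^{d-2}\cdot 2^{d-1}}{d!}$, matching the leading coefficient of $A_{S_g}(d,1)$ supplied by the Helleloid--Rodriguez-Villegas theorem. The main obstacle is the covering-theoretic first step: precisely establishing the correspondence between tree modules, embeddable arrow-labeled trees, and $F_g$-orbits of subtrees of $\usg$, including the indecomposability of $f_*(\mathbb{I}_T)$ when $T$ embeds. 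Once this is in hand, the Burnside/Cayley calculation is essentially mechanical.
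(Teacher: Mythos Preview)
Your proposal does not address the stated theorem. The statement concerns $A_{S_g}(d,1)$, the Kac counting polynomial for absolutely indecomposable representations over $\FF_q$ evaluated at $q=1$. Nothing in your argument touches $A_{S_g}$: you count tree modules, compute their leading term in $g$, and then in the final paragraph write ``matching the leading coefficient of $A_{S_g}(d,1)$ supplied by the Helleloid--Rodriguez-Villegas theorem.'' That is precisely the theorem you were asked to prove, so invoking it is circular.

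What you have sketched is, in substance, the paper's \emph{main} theorem (that $TM_{S_g}(d)$ is polynomial in $g$ with the same leading term as $A_{S_g}(d,1)$), and your approach to that is close in spirit to the paper's, though with a gap: your correspondence between tree modules and isomorphism classes of embeddable arrow-labeled trees on $d$ vertices only captures those tree modules whose lift to $\usg$ is one-dimensional at every vertex. Tree modules arising as $\pi_*(N)$ with $N$ supported on a subquiver of $\usg$ with fewer than $d$ vertices (e.g.\ the $D_4$-type example in the paper) have no embeddable structure tree, so your count omits them. These only contribute to lower-order terms, so your leading-term conclusion for $TM_{S_g}(d)$ survives, but your claim that the full count is polynomial in $g$ ``since each Burnside and inclusion--exclusion contribution is'' does not follow from what you have written.

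As for the actual statement: the paper does not prove it at all; it is quoted from Helleloid and Rodriguez-Villegas as external input. Their proof proceeds via Hua's combinatorial formula for $A_Q(\dd,q)$ and an analysis of its behavior as $q\to 1$, which is a completely different computation involving partitions and generating functions, not tree modules or covering theory. None of the representation-theoretic machinery in your proposal is relevant to establishing it.
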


We remark that $\#\mathscr{T}_d = d^{d-2}$ by a theorem of Cayley \cite{Cayley:1889zl}.

\section{Counting Tree Modules}\label{sect:counting}

\subsection{The universal cover of $S_g$}
The universal cover of $S_g$ is a quiver $\usg$ with structure map $\pi \colon \usg \to S_g$ which is a topological universal cover when we regard the quivers as graphs with the standard topology.
The group of automorphisms of $\usg$ that respect $\pi$ is isomorphic to the free group $F_g$ generated by the arrows $\za_1, \dotsc, \za_g$ of $S_g$.
 
In more detail, we take the set of vertices of $\usg$ to be the set of elements of $F_g$,
with a labeled arrow $v \xto{\za_i} w$ in $\usg$ whenever $w = \za_i v$ in $F_g$.  The covering map $\pi\colon \usg \to S_g$ sends every vertex to the unique vertex of $S_g$, and it sends each arrow of $\usg$ to its label in $S_g$.   The group $F_g$ acts naturally by left multiplication on the vertices of $\usg$ which then uniquely defines the action on the arrows of $\usg$ since $\usg$ is a tree.  It follows from the definition that these are deck transformations; in other words, the action of $F_g$ leaves the labels of the arrows in $\usg$ fixed.
The following lemma shows that every tree module for $S_g$ comes from a tree module on a subquiver of $\usg$.

\begin{lemma}\label{lem:sgfactor}
Let $M \in \rep(S_g)$ be a tree module.  Then there exists a tree module $N \in \rep(\usg)$ such that $M \simeq \pi_* (N)$.  \end{lemma}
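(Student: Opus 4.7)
The plan is to use the universal property of the cover $\pi \colon \usg \to S_g$ to lift a structure morphism of $M$. By definition of tree module, there exist a tree quiver $T$ and a morphism of quivers $f \colon T \to S_g$ with $M \simeq f_*(\mathbb{I}_T)$. I will produce a morphism $\tilde{f} \colon T \to \usg$ satisfying $\pi \circ \tilde{f} = f$, and then take $N := \tilde{f}_*(\mathbb{I}_T)$. Once the lift exists, the conclusion follows almost formally: pushforward is functorial in the structure map, so
\[
\pi_*(N) \;=\; \pi_* \tilde{f}_*(\mathbb{I}_T) \;=\; (\pi \circ \tilde{f})_*(\mathbb{I}_T) \;=\; f_*(\mathbb{I}_T) \;\simeq\; M,
\]
and $N$ must be indecomposable because any nontrivial decomposition of $N$ would push forward to a nontrivial decomposition of $M$, contradicting that $M$ is a tree module.

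Constructing the lift is the combinatorial heart of the argument, and this will be the main obstacle. Fix a basepoint $x_0 \in T_0$ and set $\tilde{f}(x_0) := e \in F_g$, the identity vertex of $\usg$. Since $T$ is a tree, every other vertex $x \in T_0$ is connected to $x_0$ by a unique reduced walk, that is, a unique sequence of arrows traversed either forward or backward. The image of this walk under $f$ is a word in the labels $\za_1, \dotsc, \za_g$ with signs, and at each vertex of $\usg$ there is, for each label $\za_i$, exactly one outgoing arrow and exactly one incoming arrow carrying that label; consequently, the walk lifts uniquely to a walk in $\usg$ starting at $e$. Define $\tilde{f}(x)$ to be the endpoint of this lifted walk, and define $\tilde{f}$ on each arrow of $T$ as the corresponding lifted arrow in $\usg$. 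The absence of cycles in $T$ is exactly what guarantees that $\tilde{f}$ is well-defined; since the lifted arrows were chosen to match labels and orientations with $f$, the identities $\tilde{f}_0(t\za) = t\tilde{f}_1(\za)$ and $\tilde{f}_0(h\za) = h\tilde{f}_1(\za)$ hold, and $\pi \circ \tilde{f} = f$ holds by construction.

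Having built $(T, \tilde{f})$, the pair witnesses $N$ as a tree module for $\usg$: $T$ is a tree, $N = \tilde{f}_*(\mathbb{I}_T)$, and indecomposability of $N$ was argued above. Because $\mathbb{I}_T$ has finite support, so does $\tilde{f}_*(\mathbb{I}_T)$, so $N$ is a legitimate finite-dimensional representation of $\usg$ even though $\usg$ has infinitely many vertices. Nothing else in the proof is subtle: the nontrivial content is precisely the unique path-lifting property translated into the combinatorial language of quiver morphisms, and everything else flows from the functoriality of $(-)_*$.
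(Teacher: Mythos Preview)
Your proof is correct and follows essentially the same approach as the paper: lift the structure map $f\colon T \to S_g$ through $\pi$ by fixing a basepoint and using the unique walk in $T$ to define the lift, then take $N = \tilde{f}_*(\mathbb{I}_T)$ and use functoriality of pushforward together with the fact that pushforward preserves direct sums to conclude. Your write-up is slightly more explicit about the path-lifting mechanism and the finite-support issue, but the argument is the same.
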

\begin{proof}
Let $T \xto{f} S_g$ be a structure quiver for $M$.  First we claim that $f$ factors through $\usg$ in the category of quivers over $S_g$: that is, there exists a morphism of quivers $T \xto{f'} \usg$ making the following diagram commute.
\begin{equation}\label{eq:sgfactor}
\vcenter{\hbox{
\begin{tikzpicture}[>=latex,description/.style={fill=white,inner sep=2pt}] 
\matrix (m) [matrix of math nodes, row sep=3em, 
column sep=2.5em, text height=1.5ex, text depth=0.25ex] 
{  T &  & \usg  \\ 
 & S_g &  \\ }; 
\path[->,font=\scriptsize] 
(m-1-1) edge node[auto,swap] {$ f $} (m-2-2)
(m-1-3) edge node[auto] {$ \pi $} (m-2-2)
(m-1-1) edge node[auto] {$ f' $} (m-1-3);
$\circlearrowleft$;
\end{tikzpicture}}}
\end{equation}

To see this, fix an arbitrary vertex $t_0 \in T$, and set $f'(t_0) = e$ (the identity element of $F_g$, considered as a vertex of $\usg$).  Now since $T$ is a tree, there is a unique walk in $T$ from $t_0$ to any other vertex $t$.  We can express this walk as a word $\zb_1^{\pm 1} \zb_2^{\pm 1} \cdots \zb_k^{\pm 1}$ where each $\zb_i$ is an arrow of $T$, taking $\zb_i$ when the walk is in the same direction as the arrow and $\zb_i^{-1}$ when it goes against the direction of the arrow.  Now applying $f$ to this word, we get a word in the arrows of $S_g$ and their inverses, which is nothing other than an element of $F_g$, or a vertex of $\usg$.  This defines $f'(t)$ for each vertex $t \in T$, and the effect of $f'$ on the arrows of $T$ is then uniquely determined since $\usg$ is a tree.

Now from this diagram it is immediate that $ \pi_* (f'_* (\mathbb{I}_T)) = f_* (\mathbb{I}_T) \simeq M$, and so we can take $N = f'_*(\mathbb{I}_T)$.  Note that since pushforward commutes with direct sum, the assumption that $M$ is indecomposable assures that $N$ is as well, so it is a tree module for $\usg$.
\end{proof}

\begin{example}
The following diagram shows a tree $T$, with each arrow labeled by the value of the structure map $f$ on that arrow, and the associated map $f'$ which collapses the two vertices in the middle row.  The right hand side is regarded as a subquiver of $\widetilde{S_3}$ with the values of $\pi$ indicated for each arrow.
\[
T=\vcenter{\hbox{\begin{tikzpicture}[point/.style={shape=circle,fill=black,scale=.5pt,outer sep=3pt},>=latex]
   \node[point] (1) at (-1,2) {};
   \node[point] (2) at (1,2) {};
   \node[point] (3) at (-1,1) {};
   \node[point] (4) at (1,1) {};
  \node[point] (5) at (0,0) {};
  \path[->]
  	(1) edge node[left] {$\za_1$} (3) 
  	(2) edge node[right] {$\za_2$} (4)
  	(3) edge node[left] {$\za_3$} (5) 
  	(4) edge node[right] {$\za_3$} (5);
   \end{tikzpicture}}}
   \xto{ f'}
   \vcenter{\hbox{
   \begin{tikzpicture}[point/.style={shape=circle,fill=black,scale=.5pt,outer sep=3pt},>=latex]
   \node[point] (1) at (-1,2) {};
   \node[point] (2) at (1,2) {};
   \node[point] (3) at (0,1) {};
  \node[point] (5) at (0,0) {};
  \path[->]
  	(1) edge node[left] {$\za_1$} (3) 
  	(2) edge node[right] {$\za_2$} (3)
  	(3) edge node[right] {$\za_3$} (5);
   \end{tikzpicture}}  }
\]
The tree module $M \in \rep(S_3)$ determined by $T$ is isomorphic to $\pi_*(N)$, where $N$ is the indecomposable of maximal dimension supported on this $D_4$ type subquiver of $\widetilde{S_3}$.
\end{example}

The free group $F_g$ gives a grading to the free algebra $KS_g$ in the sense of \cite[\S 3]{MR704617}.  
The following proposition may be well known, but seems to only appear in the literature with hypotheses that are not satisfied in our case, so we sketch a proof in sufficient generality for our purposes.  
Here, $\Lambda$ is a finitely generated algebra over a field $K$ (not assumed to be algebraically closed), and $\Lambda$ is graded by a torsion free group $G$.  We denote by $\modules \Lambda$ (resp., $\gr\Lambda$) the category of finite $K$-dimensional $\Lambda$ modules  (resp., finite $K$-dimensional $G$-graded $\Lambda$ modules), 
 and $\pi_* \colon \gr\Lambda \to \modules\Lambda$ is the functor that forgets the grading.

The proof of the first statement below directly follows Gordon and Green's proof of \cite[Theorem 3.2]{MR659212} (see also \cite[Lemma 3.5]{gabrieluniversalcover}).

\begin{prop}\label{prop:covers}
For $N \in \gr\Lambda$, we have that $N$ is indecomposable if and only if $\pi_*(N)$ is indecomposable.
If $N$ is indecomposable and $N' \in \gr \Lambda$ is another representation such that $\pi_*(N) \simeq \pi_*(N')$, then there exists $h \in G$ such that $N' \simeq hN$.
\end{prop}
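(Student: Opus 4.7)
The plan is to exploit the natural $G$-grading on $\End_\Lambda(\pi_*N)$ induced by the grading on $N$. For any graded modules $N,M$, and with the shift $gM$ defined by $(gM)_h := M_{hg^{-1}}$, there is a canonical decomposition
\[
\Hom_\Lambda(\pi_* N, \pi_* M) \;=\; \bigoplus_{g \in G} \Hom_{\gr \Lambda}(N, gM),
\]
making $\End_\Lambda(\pi_* N)$ into a $G$-graded $K$-algebra whose identity-degree component is $\End_{\gr \Lambda}(N)$. Since $N$ is finite-dimensional over $K$, indecomposability of $N$ in $\gr\Lambda$ is equivalent to locality of $\End_{\gr\Lambda}(N)$, so both assertions can be analyzed through this grading.

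For the first statement, the direction ``$N$ decomposable in $\gr\Lambda$ $\Rightarrow$ $\pi_*N$ decomposable'' is immediate. For the converse, suppose $N$ is graded-indecomposable and $\zeta \in \End_\Lambda(\pi_*N)$ is an idempotent. I would decompose $\zeta = \sum_g \zeta_g$ into $G$-homogeneous components (only finitely many nonzero, as $\End_\Lambda(\pi_*N)$ is finite-dimensional). Following the Gordon--Green argument, I would then analyze the equation $\zeta^2 = \zeta$ inside the $G$-graded algebra $\End_\Lambda(\pi_*N)$, using torsion-freeness of $G$ to arrange that the identity-degree component $\zeta_1$ is itself an idempotent of $\End_{\gr\Lambda}(N)$; locality of this ring then forces $\zeta_1$, and ultimately $\zeta$ itself, to be $0$ or $1$. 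This idempotent-reduction step is the main obstacle, and is precisely where torsion-freeness of $G$ is essential, since in general the $G$-homogeneous components of an idempotent need not be idempotent in a $G$-graded ring.

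For the second statement, suppose $N$ is graded-indecomposable and $\phi\colon \pi_*N \xto{\sim} \pi_*N'$ is an isomorphism. By the first part, $\pi_*N'$, and hence $N'$, is also graded-indecomposable. Decompose $\phi = \sum_g \phi_g$ with $\phi_g \in \Hom_{\gr\Lambda}(N, gN')$ and $\phi^{-1} = \sum_g \psi_g$ with $\psi_g \in \Hom_{\gr\Lambda}(N', gN)$. The identity $\phi^{-1}\phi = 1_N$, restricted to the degree-$1$ component, reads
\[
\sum_{g \in G} \psi_{g^{-1}} \circ \phi_g \;=\; 1_N \;\in\; \End_{\gr\Lambda}(N).
\]
Since $\End_{\gr\Lambda}(N)$ is local, at least one summand $\psi_{g^{-1}} \circ \phi_g$ must be a unit, forcing $\phi_g\colon N \to gN'$ to be a split monomorphism of graded modules. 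Because $gN'$ is graded-indecomposable (shifting is an autoequivalence of $\gr\Lambda$), $\phi_g$ must in fact be an isomorphism, giving $N \simeq gN'$, i.e.\ $N' \simeq g^{-1}N$; take $h = g^{-1}$.
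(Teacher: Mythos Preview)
Your approach is essentially the same as the paper's: both rely on the $G$-grading $\End_\Lambda(\pi_*N) \cong \bigoplus_{g}\Hom_{\gr\Lambda}(N,gN)$ and invoke the Gordon--Green result to pass from locality of the degree-$e$ component $\End_{\gr\Lambda}(N)$ to information about the full endomorphism ring. Your argument for the second statement is correct and in fact more explicit than the paper's, which simply cites the graded decomposition and leaves the ``some summand is a unit, hence $\phi_g$ is a split mono onto an indecomposable'' step to the reader.

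One small caveat on your sketch of the first statement: the Gordon--Green argument does not proceed by showing that the identity-degree component $\zeta_1$ of an idempotent $\zeta$ is itself an idempotent. In general it is not; the degree-$e$ component of $\zeta^2=\zeta$ reads $\sum_g \zeta_{g^{-1}}\zeta_g = \zeta_1$, not $\zeta_1^2=\zeta_1$. What torsion-freeness actually buys (and what the paper emphasizes) is that every product of two homogeneous elements of non-identity degree has a nonzero right annihilator; from this one deduces that $\End_\Lambda(\pi_*N)$ is local outright, which is strictly stronger than having no nontrivial idempotents. Since you explicitly defer to Gordon--Green for this step, your proposal is fine, but the informal description of the mechanism should be corrected if you expand it.
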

\begin{proof}  If $N$ is decomposable, then $\pi_*(N)$ is decomposable because $\pi_*$ commutes with direct sum.  The converse follows from the implications
\begin{align*}
N \text{ indecomposable in }\gr \Lambda &\overset{\text{(a)}}{\Longrightarrow} \End_{\gr\Lambda}(N) \text{ local} \overset{\text{(b)}}{\Longrightarrow} \\
\End_{\Lambda} (N) \text{ local} &\overset{\text{(c)}}{\Longrightarrow} \pi_*(N) \text{ indecomposable in }\modules \Lambda .
\end{align*}
Implications (a) and (c) follow from (for example) \cite[Ch.1~\S1]{Leuschke:2011fk}, using the facts that (i) idempotents split in $\modules \Lambda$ and $\gr\Lambda$, since these are both abelian categories, and that (ii) both $\End_{\gr\Lambda}(N)$ and $\End_{\Lambda} (N)$ are Artinian, since $N$ is finite dimensional over $K$.

Implication (b) is a direct generalization of \cite[Theorem~3.1]{MR659212} to our setting.  The assumption that $G$ is torsion free is used to assure 
a nonzero right annihilator 
for every element which can be written as a product of two elements outside of the identity degree.

The second statement follows from the vector space decomposition
\begin{equation}
\End_{\Lambda} (N) \cong \bigoplus_{h \in G} \Hom_{\gr \Lambda}(N, hN)
\end{equation}
(see the comments following \cite[Lemma~2.1]{MR659212}).
\end{proof}

Applying this to $\Lambda = KS_g$, we use \cite[Theorem 3.2]{MR704617} to conclude that
\[
\pi_* \colon \rep (\usg) \to \rep(S_g)
\]
has the same property: a representation $N \in \rep(\usg)$ is indecomposable if and only if $\pi_*(N)$ is indecomposable, and in this case, any $N' \in \rep(\usg)$ such that $\pi_*(N') \simeq \pi_*(N)$ satisfies $N' \simeq wN$ for some $w \in F_g$.

\subsection{Counting tree modules}
For any quiver $Q$, let $TM_{Q}(d)$ be the number of isomorphism classes of sincere tree modules for $Q$ of dimension $d$. (A representation of a quiver is \keyw{sincere} when it is nonzero at each vertex of the quiver.)  We denote by $\mathscr{Q}_n$ the set of tree quivers with $n$ (unlabeled) vertices, and set $\mathscr{Q} = \bigcup_{n\geq 1} \mathscr{Q}_n$.

Now we will count tree modules for $S_g$ by counting tree modules on subquivers of $\usg$.
The action of $F_g$ on $\usg$ induces an action on the set of subquivers of $\usg$.  For any tree quiver $Q$, let $[Q: \usg]$ be the number of $F_g$-orbits of subquivers of $\usg$ whose elements are isomorphic to $Q$ as a directed graph.
The second statement in Proposition \ref{prop:covers} implies that each tree module for $S_g$ is associated to a unique $F_g$ orbit in $\usg$.
This gives the following equation:
\begin{equation}\label{eq:qsgcount}
TM_{S_g}(d) = \sum_{Q\in \mathscr{Q}} [Q: \usg]\ TM_Q(d) .
\end{equation}

We now analyze the terms $ [Q: \usg]$.  For a given tree quiver $Q$, let $W_Q(k)$ be the number of surjective functions
\[
Q_1 \xto{\phi} \{1, 2, \dotsc, k\}
\]
satisfying the following condition:
\begin{align*}
\text{(W)} \quad \phi(\alpha) \neq \phi(\beta) \ \text{whenever }&\text{two arrows $\alpha, \beta$ share a common starting vertex,}\\
&\text{or share a common ending vertex.}
\end{align*}
One may think of these as labelings of the arrows of $Q$ which use precisely $k$ distinct labels and avoid the configurations
\[
\xleftarrow{i} \xto{i} \qquad \text{and} \qquad \xto{i} \xleftarrow{i}.
\]
Note that the set of functions $Q_1 \xto{\phi} \{1, 2, \dotsc, k\}$ is in bijection with morphisms of quivers from $Q$ to $S_k$, since the all vertices must be mapped to the unique vertex of $S_k$ and the compatibility conditions with the head and tail functions are automatically satisfied.  With this in mind, the condition (W) is equivalent to requiring that the morphism $f\colon Q \to S_k$ determined by the labeling $\phi$ is a \keyw{winding} in the sense of Krause \cite{MR1090218}.

\begin{prop}\label{prop:qsgcount}
For any tree quiver $Q$, we have that 
\begin{equation}\label{eq:qusgcount}
[Q: \usg] = \sum_{k=1}^{d-1} \frac{W_Q(k)}{\#\Aut(Q)}\binom{g}{k},
\end{equation}
where $d$ is the number of vertices of $Q$.

\end{prop}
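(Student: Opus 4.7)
The plan is to parametrize subquivers of $\usg$ isomorphic to $Q$ (up to the $F_g$-action) by morphisms $Q \to \usg$ that are injective, and then to identify the latter with windings $Q \to S_g$. Following the argument in the proof of Lemma \ref{lem:sgfactor}, any morphism $f\colon Q \to S_g$ lifts to a morphism $f' \colon Q \to \usg$ whose value is uniquely determined by the choice of image of a fixed basepoint $t_0 \in Q_0$. Since $F_g$ acts on $\usg$ by deck transformations and permutes the possible basepoint images simply transitively, the $F_g$-orbits of morphisms $Q \to \usg$ are in bijection with morphisms $Q \to S_g$.

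I would then identify which morphisms $Q \to \usg$ are embeddings. Because $\pi$ is a covering, $f'$ is locally injective if and only if $f = \pi \circ f'$ is, and local injectivity of $f$ is exactly the winding condition (W). Moreover, when $Q$ and $\usg$ are both trees, any locally injective morphism between them is automatically injective: if $f'(x) = f'(y)$ with $x \neq y$, then the unique walk from $x$ to $y$ in $Q$ would map to a nontrivial closed walk in $\usg$, which, being in a tree, must backtrack---contradicting local injectivity. Consequently, $F_g$-orbits of embeddings $Q \hookrightarrow \usg$ correspond bijectively to windings $Q \to S_g$.

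Next, the map sending an embedding $Q \hookrightarrow \usg$ to its image subquiver has fibers that are torsors for the free $\Aut(Q)$-action by pre-composition; since this $\Aut(Q)$-action commutes with the $F_g$-action by post-composition, passing to $F_g$-orbits gives
\[
\#\{F_g\text{-orbits of embeddings } Q \hookrightarrow \usg\} = [Q : \usg] \cdot \#\Aut(Q).
\]
Finally, I would count windings $Q \to S_g$ by stratifying according to the number $k$ of distinct arrow-labels appearing in the image: selecting which $k$ of the $g$ arrows of $S_g$ are used contributes a factor $\binom{g}{k}$, and by definition the number of surjective windings onto the resulting $S_k$ is $W_Q(k)$. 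Summing from $k=1$ to $d-1$ (the number of arrows of $Q$) and dividing by $\#\Aut(Q)$ produces the stated formula.

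The step I expect to require the most care is the bookkeeping of the two commuting group actions and verifying that the fiber-cardinality identity passes correctly from embeddings to $F_g$-orbits (in particular, that $F_g$ acts freely on embeddings, which follows since it acts freely on vertices of $\usg$). The underlying topological inputs---that lifts of a map from a simply-connected source to $\usg$ form an $F_g$-torsor, and that locally injective morphisms between tree quivers are injective---are standard once set up.
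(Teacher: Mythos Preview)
Your proposal is correct and follows essentially the same route as the paper: both arguments identify $[Q:\usg]$ with the cardinality of $\Hom^\circ(Q,\usg)/(\Aut(Q)\times F_g)$, identify $F_g$-orbits of embeddings with windings $Q\to S_g$, and then stratify windings by the number of labels used. Your argument is in fact slightly more explicit than the paper's in one place---you justify that every winding $Q\to S_g$ lifts to an \emph{injective} morphism $Q\to\usg$ via the ``locally injective between trees implies injective'' observation, whereas the paper leaves the surjectivity of $\phi$ onto windings implicit.
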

\begin{proof}
We will show that each is equal to the cardinality of the orbit space $\frac{\Hom^\circ (Q, \usg)}{\Aut(Q)\times F_g}$, where $\Hom^\circ (Q, \usg)$ denotes the set of injective quiver morphisms from $Q$ to $\usg$. One equality is essentially by definition: two maps have the same image subquiver if and only if they are in the same $\Aut(Q)$ orbit, and then considering the $F_g$ action gives that
\[
\#\left(\frac{\Hom^\circ (Q, \usg)}{\Aut(Q) \times F_g}\right) = [Q: \usg] .
\]

On the other hand, any morphism in $f\in \Hom^\circ (Q, \usg)$ determines a map on arrow sets
\[
Q_1 \xto{\phi(f)} \{1, 2, \dotsc, g\},
\]
where an arrow $\za \in Q$ is sent to the index $i$ such that $f(\za)$ is labeled by $\za_i$ in $\usg$.  These automatically satisfy the condition (W) above by construction of $\usg$, which has precisely one incoming and one outgoing arrow labeled by each $\za_i$ at each vertex.  

Now we see that $\phi(f)$ completely determines the $F_g$ orbit of a quiver morphism $f$.  Indeed, we have that $\phi$ is constant on the $F_g$-orbits in $\Hom^\circ (Q, \usg)$ because the $F_g$ action does not change the label of an arrow in $\usg$, and whenever $\phi(f_1) \neq \phi(f_2)$ then there is an arrow $\za\in Q$ such that $f_1(\za)$ and $f_2(\za)$ have different labels, so $f_1$ and $f_2$ are not in the same $F_g$ orbit.

Now consider the sets
\[
P(I) := \setst{f\in \Hom^\circ (Q, \usg)}{\im \phi(f) = I} \quad \text{ for } I \subseteq \{1, 2, \dotsc, g\}.
\]
The above paragraph implies that the number of $F_g$ orbits of $P(I)$ is precisely the number of distinct $\phi(f)$ for $f \in P(I)$, which is just $W_Q(\# I)$.  Because $\Hom^\circ (Q, \usg)$ consists of injective morphisms, $\Aut(Q)$ acts freely on $P(I)$, so we can count the number of $\Aut(Q) \times F_g$ orbits in $P(I)$ to be 
\begin{equation}\label{eq:autqfg orbits}
\# \left(\frac{P(I)}{\Aut(Q) \times F_g} \right) = \frac{W_Q(\# I)}{\#\Aut(Q)}
\end{equation}

Now the partition
\[
\Hom^\circ (Q, \usg) = \coprod_{I \subseteq \{1, 2, \dotsc, g\}} P(I)
\]
combined with \eqref{eq:autqfg orbits} (and the fact $W_Q(k) = 0$ when $k > d-1$, the number of arrows of $Q$) gives that 
\[
\#\left(\frac{\Hom^\circ (Q, \usg)}{\Aut(Q) \times F_g}\right) = \sum_{k=1}^{d-1} \frac{W_Q(k)}{\#\Aut(Q)}\binom{g}{k},
\]
as desired.
\end{proof}

\subsection{The leading term of $TM_{S_g}(d)$}

We are now ready to prove the main theorem, which we state slightly differently than in the introduction.

\begin{theorem}
The number of tree modules of dimension $d$ for $S_g$ is a polynomial in $g$ of degree $d-1$.
Furthermore, when written in the basis $\setst{\binom{g}{i}}{0 \leq i \leq d-1}$ of $\bigoplus_{i=0}^{d-1}\Q g^i$, the polynomial $TM_{S_g}(d)$ has the same leading term as $A_{S_g}(d, 1)$, namely
\begin{equation}
\frac{2^{d-1}\#\mathscr{T}_d}{d}\binom{g}{d-1} = \sum_{Q \in \mathscr{Q}_d} \frac{(d-1)!}{\#\Aut Q} \binom{g}{d-1}.
\end{equation}
\end{theorem}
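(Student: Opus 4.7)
My plan is to substitute the formula from Proposition \ref{prop:qsgcount} into equation \eqref{eq:qsgcount} and isolate the coefficient of $\binom{g}{d-1}$ by a degree count. Writing
\[
TM_{S_g}(d) = \sum_{n \geq 1} \sum_{Q \in \mathscr{Q}_n} \sum_{k=1}^{n-1} \frac{W_Q(k) \, TM_Q(d)}{\#\Aut Q}\binom{g}{k},
\]
I note first that the inner sum only reaches $k = n - 1$, so a tree quiver $Q \in \mathscr{Q}_n$ can only contribute to the $\binom{g}{d-1}$ term if $n \geq d$. On the other hand, $TM_Q(d) \neq 0$ for $Q \in \mathscr{Q}_n$ requires $n \leq d$, since a sincere representation on an $n$-vertex quiver has total dimension at least $n$. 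Combining these shows the double sum is effectively finite, that $TM_{S_g}(d)$ is a polynomial in $g$ of degree at most $d-1$ in the basis $\{\binom{g}{i}\}$, and that the top coefficient comes entirely from $Q \in \mathscr{Q}_d$.

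For such $Q$, sincerity forces the dimension vector of a sincere $d$-dimensional module to be $(1, \ldots, 1)$, and the unique indecomposable with that dimension vector is $\mathbb{I}_Q$ itself (a tree module), so $TM_Q(d) = 1$. Also, $W_Q(d-1)$ enumerates bijections $Q_1 \to \{1, \ldots, d-1\}$ (since surjectivity forces bijectivity here), all of which automatically satisfy condition (W); hence $W_Q(d-1) = (d-1)!$. This identifies the coefficient of $\binom{g}{d-1}$ in $TM_{S_g}(d)$ as
\[
\sum_{Q \in \mathscr{Q}_d} \frac{(d-1)!}{\#\Aut Q},
\]
which is already the right-hand side of the identity in the statement, and establishes that the degree is exactly $d-1$ since this coefficient is positive.

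The final step is the purely combinatorial identity
\[
\sum_{Q \in \mathscr{Q}_d} \frac{(d-1)!}{\#\Aut Q} = \frac{2^{d-1}\#\mathscr{T}_d}{d}.
\]
I would prove this by letting $S_d$ act on the set of vertex-labeled oriented trees on $d$ vertices by permutation of labels. The $S_d$-orbits correspond to $\mathscr{Q}_d$, with the stabilizer of a labeled representative of the orbit of $Q$ equal to $\Aut Q$, so by orbit-stabilizer each orbit has size $d!/\#\Aut Q$. Counting the total set a second way, a vertex-labeled oriented tree is an underlying labeled tree (there are $\#\mathscr{T}_d = d^{d-2}$ of these) together with an orientation of each of its $d-1$ edges, giving $2^{d-1}\#\mathscr{T}_d$ in total. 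Equating and dividing by $d$ gives the identity. Matching against $A_{S_g}(d, 1)$ then only requires converting the Helleloid--Rodriguez-Villegas leading term $\frac{2^{d-1}\#\mathscr{T}_d}{d!} g^{d-1}$ to the $\binom{g}{i}$-basis via $g^{d-1} = (d-1)! \binom{g}{d-1} + (\text{lower})$, which gives the same leading coefficient. The main care point in this proof is tracking the two simultaneous constraints on $n$ (from the degree bound $k \leq n-1$ and from sincerity $n \leq d$) to conclude that only $n = d$ contributes to the leading term.
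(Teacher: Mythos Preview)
Your proof is correct and follows essentially the same approach as the paper: substitute Proposition~\ref{prop:qsgcount} into \eqref{eq:qsgcount}, use the two constraints $k\le n-1$ and $n\le d$ to isolate the top coefficient at $n=d$, evaluate $TM_Q(d)=1$ and $W_Q(d-1)=(d-1)!$ for $Q\in\mathscr{Q}_d$, and then verify the combinatorial identity by double-counting vertex-labeled oriented trees via orbit--stabilizer. You are slightly more explicit than the paper in justifying the exact degree (positivity of the top coefficient) and in spelling out the basis conversion of the Helleloid--Rodriguez-Villegas leading term, but the argument is the same.
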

\begin{proof}
If $Q$ has more than $d$ vertices, then $TM_Q(d) = 0$ since $Q$ cannot have a sincere representation of dimension $d$.  So the sum in equation \eqref{eq:qsgcount} is finite, and combining this with \eqref{eq:qusgcount} immediately gives that $TM_{S_g}(d)$ is polynomial in $g$.  These equations also show that the highest degree term comes when $Q$ has $d$ vertices and $k = d-1$ in \eqref{eq:qusgcount}.  This proves the first statement.

A tree quiver with $d$ vertices admits only one sincere indecomposable of dimension $d$, namely the tree module $\mathbb{I}_Q$, so we have $TM_Q(d) = 1$ for $Q \in \mathscr{Q}_d$. 
Such a $Q$ has $d-1$ edges and so $W_Q(d-1) = (d-1)!$ since there is no chance to violate condition (W).  Thus the leading term of $TM_{S_g}(d)$ is precisely
\begin{equation}
\sum_{Q\in \mathscr{Q}_d} \frac{(d-1)!}{\#\Aut(Q)}\binom{g}{d-1}.
\end{equation}

The result \eqref{eq:RVlead} of Helleloid and Rodriguez-Villegas, expressed in our preferred basis, is that the leading term of $A_{S_g}(d,1)$ is $\frac{2^{d-1}\#\mathscr{T}_d}{d}\binom{g}{d-1}$, so now we need to show that the coefficients are equal.  Rearranging, we find that this is equivalent to showing that 
\begin{equation}
2^{d-1}\#\mathscr{T}_d = \sum_{Q\in \mathscr{Q}_d} \frac{d!}{\#\Aut(Q)} .
\end{equation}

Now it is straightforward to see that both expressions count tree quivers with labeled vertices.  On the left hand side, each tree with labeled vertices has $2^{d-1}$ orientations; on the right hand side, any unlabeled tree quiver with $d$ vertices admits $d!$ labelings of its vertices, with two labelings being indistinguishable if and only if they differ by an automorphism of $Q$.
\end{proof}

\section{Examples and Questions}

\subsection{Beyond leading terms}
For small $d$, we can compute $TM_{S_g}(d)$ from \eqref{eq:qsgcount} by explicitly enumerating all tree quivers up to $d$ vertices and calculating $[Q:\usg]$ and $TM_Q(d)$ for each.  To give the reader a feel for this, we illustrate the case $d=3$, listing each tree quiver $Q$ with 3 vertices and the corresponding value of $[Q:\usg]$ beneath it.
\[
\begin{tikzpicture}[point/.style={shape=circle,fill=black,scale=.5pt,outer sep=3pt},>=latex]
   \node[point] (1) at (-1,1) {};
   \node[point] (2) at (1,1) {};
  \node[point] (3) at (0,-.5) {};
    \node[draw, color=white,label={below: $\binom{g}{2}$}]at (0,-1) {};
  \path[->]
  	(1) edge (3) 
  	(2) edge (3);
   \end{tikzpicture}
\qquad
\begin{tikzpicture}[point/.style={shape=circle,fill=black,scale=.5pt,outer sep=3pt},>=latex]
   \node[point] (1) at (0,1) {};
   \node[point] (2) at (1,-.5) {};
  \node[point] (3) at (-1,-.5) {};
    \node[draw, color=white,label={below:$\binom{g}{2}$}]at (0,-1) {};
  
  \path[->]
  	(1) edge (2) 
  	(1) edge (3);
   \end{tikzpicture}
\qquad%
\begin{tikzpicture}[point/.style={shape=circle,fill=black,scale=.5pt,outer sep=3pt},>=latex]
   \node[point] (1) at (0,1.5) {};
   \node[point] (2) at (0,0.5) {};
  \node[point] (3) at (0,-.5) {};
    \node[draw, color=white,label={below:$\binom{g}{1} + 2\binom{g}{2}$}]at (0,-1) {};
  
  \path[->]
  	(1) edge (2) 
  	(2) edge (3);
   \end{tikzpicture}
\]
The case $d=4$ involves more 8 tree quivers, an additional 27 for $d=5$, and there are 92 distinct tree quivers with 6 vertices.
For this reason we have omitted the details of the computation, but the results are given here:
\begin{center}
\begin{tabular}{|c | c |}
\hline 
$d$ & $\#$ tree modules \\
\hline
1 & 1\\
2 & $\binom{g}{1}$\\
3 & $4\binom{g}{2} + \binom{g}{1}$\\
4 & $32\binom{g}{3} + 20\binom{g}{2} + \binom{g}{1}$ \\
5 & $400\binom{g}{4} + 428\binom{g}{3} + 93\binom{g}{2} +\binom{g}{1}$\\
6 & $6912\binom{g}{5}+10656\binom{g}{4} + 4524\binom{g}{3} + 448\binom{g}{2} +\binom{g}{1}$\\
\hline
\end{tabular}
\end{center}

Comparing with the table of values for $A_{S_g}(d,1)$ in the introduction of \cite{MR2543630}, we find that $TM_{S_g}(d) = A_{S_g}(d, 1)$ for $d \leq 5$ (as polynomials in $g$).  When $d=6$, however, the situation becomes more complicated because this is the first occurrence of terms with $TM_Q(d) \neq 1$.  Specifically, if $Q$ is of type $\widetilde{D}_4$ and $\delta$ is the minimal isotropic root, there are 6 tree modules of this dimension vector, which are precisely those that lie in exceptional tubes.
We find that
\begin{equation}
\begin{split}
TM_{S_g}(6) - A_{S_g}(6, 1) &= 16\binom{g}{4} + 12\binom{g}{3} + \binom{g}{2}\\
&= \frac{2}{3}g\left(g-\frac{1}{2}\right)\left(g-1\right)\left(g-\frac{3}{2}\right),
\end{split}
\end{equation}
so in particular $TM_{S_g}(6) \geq A_{S_g}(6, 1)$ for all $g \in \Z_{\geq 0}$ with strict inequality for $g > 1$.

Since the number of tree quivers with $d$ vertices grows rapidly with $d$, it quickly becomes difficult to enumerate all tree quivers up to $d$ vertices, calculate their automorphism groups, and find $W_Q(k)$ for each $1 \leq k \leq d-1$.  The case $d=7$ seems to be already intractable to do by hand, but perhaps suitable for a computer-aided calculation.

For an arbitrary quiver $Q$, we can refine our count of tree modules to be by dimension \emph{vector} $\dd$; we denote this number by $TM_Q(\dd)$.  We pose the following question:

\begin{question}\label{qu:ineq}
Let $Q$ be an arbitrary quiver and $\dd$ a dimension vector for $Q$.  Is it always true that $TM_Q(\dd) \geq A_Q(\dd, 1)$?
\end{question}

This inequality has been observed to be true in many examples; before summarizing a list of general situations under which it is true, we state a proposition which may be already known to experts.
\begin{prop}\label{prop:affineexcep}
Let $Q$ be an affine Dynkin quiver and $M$ a regular indecomposable lying in an exceptional tube.  Then $M$ is a tree module.
\end{prop}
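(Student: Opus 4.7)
The plan is to proceed by induction on the quasi-length $\ell$ of $M$ within its exceptional tube $\mathcal{T}$. Let $r \geq 2$ be the rank of $\mathcal{T}$ with quasi-simple objects $E_1, \ldots, E_r$. Each regular indecomposable of $\mathcal{T}$ has a unique quasi-length $\ell \geq 1$, and for $\ell \geq 2$ fits into a non-split short exact sequence
\[
0 \to E \to M \to M' \to 0
\]
where $E$ is a quasi-simple and $M'$ is an indecomposable of quasi-length $\ell - 1$ in $\mathcal{T}$. This is the inductive engine.

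For the base case $\ell = 1$, I would verify case-by-case across the affine Dynkin types ($\widetilde{A}_n$, $\widetilde{D}_n$, and $\widetilde{E}_n$ for $n \in \{6,7,8\}$) that each quasi-simple of an exceptional tube is a tree module. The dimension vectors of these quasi-simples are small and tabulated in standard references such as \cite{assemetal}: for $\widetilde{A}_n$ they are simples, hence trivially tree modules; for $\widetilde{D}_n$ and $\widetilde{E}_n$, each quasi-simple is supported on a Dynkin subquiver and is the unique indecomposable of its dimension vector there, hence a tree module by Gabriel's theorem (every indecomposable over a Dynkin quiver is known to be a tree module, and the small dimension vectors make this immediate by inspection).

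For the inductive step, fix tree structures $T_1 \to Q$ on $E$ and $T_2 \to Q$ on $M'$, yielding tree bases $\{e_1, \ldots, e_a\}$ and $\{n_1, \ldots, n_b\}$ respectively. Lifting the $n_j$ to elements $\tilde n_j \in M$ via the surjection $M \to M'$, the combined basis $\{e_1, \ldots, e_a, \tilde n_1, \ldots, \tilde n_b\}$ makes each arrow of $Q$ act by a block upper triangular matrix; the diagonal blocks account for $(a-1) + (b-1) = d-2$ nonzero entries (all equal to $1$) coming from $T_1$ and $T_2$. The main obstacle is to use the available freedom in the choice of lifts (parameterized by $\Hom(M', E)$) to ensure that the off-diagonal blocks contribute precisely one additional nonzero entry equal to $1$, representing the generator of the one-dimensional extension group $\operatorname{Ext}^1(M', E)$ that defines the sequence. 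Showing that this single entry can always be concentrated on a single arrow of $Q$ is the technical crux, as it yields a structure tree $T$ obtained by joining $T_1$ and $T_2$ along one new arrow, with $d$ vertices and $d-1$ edges as required. For quasi-length $\ell < r$, one may alternatively invoke Ringel's theorem \cite{Ringel:1998gf} that every exceptional module over a hereditary algebra is a tree module, bypassing the inductive step in that range and reducing the problem to quasi-lengths $\ell \geq r$.
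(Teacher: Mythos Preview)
Your inductive skeleton matches the paper's proof: induct on quasi-length, use the short exact sequence with a quasi-simple factor, and glue tree structures. Two points deserve correction or tightening.

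\textbf{Base case.} Your case-by-case plan is unnecessary, and the claim for $\widetilde{A}_n$ is not correct: the quasi-simples in the non-homogeneous tubes are in general string modules (supported along an arc of the cycle), not simple modules. The paper's argument is both shorter and uniform: a quasi-simple in an \emph{exceptional} tube is an exceptional module (a brick with no self-extensions), so Ringel's theorem \cite{Ringel:1998gf} immediately gives that it is a tree module, with no type-by-type analysis required.

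\textbf{Inductive step.} You correctly isolate the issue---showing that the extension cocycle can be concentrated in a single matrix entry---but you leave it as ``the technical crux'' without resolving it. This is precisely the content of the gluing lemma the paper invokes (Weist \cite[Lemma~3.11]{Weist:2010fk}, also the final remark of \cite{Ringel:2010fk}): if $A$ and $B$ are tree modules with $\dim\Ext^1_Q(A,B)=1$, then the unique non-split extension is again a tree module. You assert the one-dimensionality of $\Ext^1(M',E)$ but do not justify it; in the paper's filtration (with quasi-simple \emph{quotient} $M''$ rather than sub) the relevant group is $\Ext^1(M'',M')$, and its one-dimensionality follows from the standard description of morphisms and extensions in a stable tube. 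Either direction works, but you should state and verify the $\dim\Ext^1=1$ claim and then cite the gluing lemma rather than leaving the concentration argument open. Your alternative of invoking Ringel's theorem directly for $\ell<r$ is fine but does not avoid the issue, since the induction must continue past $\ell=r$.
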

\begin{proof}
Proceed by induction on the quasi-length of $M$.  If $M$ is quasi-simple in an exceptional tube, then $M$ is exceptional and known to be a tree module by Ringel's work \cite{Ringel:1998gf}.  If the quasi-length of $M$ is at least two, there exists a short exact sequence $0 \to M' \to M \to M'' \to 0$ with $M''$ quasi-simple and $\dim\Ext^1_Q(M'', M') = 1$.  Since $M'$ has smaller quasi-length than $M$, by induction it is a tree module and the fact that $\dim\Ext^1_Q(M'', M') = 1$ allows one to ``glue'' the tree structures of $M'$ and $M''$ to a tree structure for $M$.  (See \cite[Lemma~3.11]{Weist:2010fk} for the precise statement, or the final remark of \cite{Ringel:2010fk}.)
\end{proof}

In fact, this gluing procedure as one moves up the tube is remarkably easy to carry out in practice: once one decides how to glue adjacent modules in the mouth of the tube (the choice is not unique), this determines an algorithm to move up the tube and get an explicit tree structure for each module of larger quasi-length.

The answer to Question \ref{qu:ineq} is ``yes'' in the following cases:
\begin{itemize}
\item $\dd$ is a real Schur root of any $Q$: there is a unique indecomposable, by Kac's Theorem, and it is a tree module since it is exceptional, by Ringel's work.
\item $\dd$ is an imaginary root of an affine Dynkin $Q$: let $n+1$ be the number of vertices of $Q$.  Then $A_Q(\dd, q) = q+n$, which can be seen directly from matrix presentations of the indecomposables, so $A_Q(\dd, 1) = n+1$.

The loop quiver and Kronecker have no exceptional tubes but can be handled easily, so suppose $Q$ is not one of these.  Then the number of indecomposables of dimension $\dd$ in exceptional tubes is always at least $n+1$ (see for example the tables of \cite[\S6]{MR0447344} for one orientation, and use APR-tilting to get any other orientation).  By Proposition \ref{prop:affineexcep}, each of these is a tree module.
\item $\dd$ is a real non-Schur root of an affine Dynkin $Q$: there is a unique representation of dimension vector $\dd$, and it lies in an exceptional tube, so by Proposition \ref{prop:affineexcep} it is a tree module.
\end{itemize}

This question is related to a long-standing question about tree modules.  If Kac's conjecture that $A_Q(\dd,q)$ has nonnegative coefficients is true, then $A_Q(\dd, 1) \geq 1$ whenever $Q$ has an indecomposable representation of dimension vector $\dd$, with strict inequality when $\dd$ is an imaginary root.  Assuming this, Question \ref{qu:ineq} would generalize the following question \cite[p.~472]{Ringel:1998gf}.

\begin{question}[Ringel]
Let $Q$ be a quiver and $\dd$ be a dimension vector for which there exists an indecomposable representation. Does there always exists a tree module of dimension vector $\dd$, and more than one when $\dd$ is imaginary?
\end{question}

Ringel's question has been answered affirmatively by T. Weist for all roots of generalized Kronecker quivers \cite{Ringel:2010fk,MR2578596}, as well as isotropic roots and imaginary Schur roots of arbitrary quivers \cite{Weist:2010fk}.

\subsection{Some intriguing examples}\label{sect:examples}
As mentioned in Section \ref{sect:countpoly}, Kac conjectured that the coefficients of $A_Q(\dd, q)$ are always nonnegative.  He continues with an even more bold conjecture, which we now describe.  First he considers a space of isomorphism classes of indecomposable $\C$-representations of dimension vector $\dd$, obtained via successive applications of Rosenlicht's theorem \cite[\S5.3]{MR1648601}.  Let us call this space $\ind(Q, \dd)$, though care must taken because this procedure is not well-defined: there is not a \emph{unique} maximal open set admitting a geometric quotient (see \cite[\S6]{MR1648601} for an example).  So the isomorphism class of $\ind(Q, \dd)$ is not even well-defined as a variety.

For the sake of discussion, let us take any statement about ``$\ind(Q,\dd)$'' to be that there exists some choice of Rosenlicht quotients making the statement true for the resulting $\ind(Q,\dd)$.  Then Kac conjectures that we have a finite decomposition into constructible subsets which are affine algebraic cells 
\[
\ind(Q, \dd) = \coprod_{i=1}^N C_i, \qquad C_i \simeq \mathbb{A}_{\C}^{n_i}
\]
with the coefficient of $q^k$ in $A_Q(\dd, q)$ counting the number of cells of dimension $k$.  Notice that whenever such a decomposition exists, $A_Q(\dd, 1)$ counts the total number of cells; in general, $A_Q(\dd, 1)$ gives the Euler characteristic of $\ind(Q,\dd)$ in singular cohomology with compact support \cite[Lemma~8.1]{Reineke:2008fk}.  So we would like to know if there is any relation between tree modules and these cells.

\begin{example}
Consider the dimension vector $\dd=(2,2,1)$ for the quiver
\[
Q=
\vcenter{\hbox{
\begin{tikzpicture}[point/.style={shape=circle,fill=black,scale=.5pt,outer sep=3pt},>=latex]
  \node[point] (2) at (0,0) {};
  \node[point] (3) at (2,0) {};
  \node[point] (1) at (4,0) {};  
  \path[->]
  	(3.145) edge node[above] {$\za$} (2.35) 
  	(3.-145) edge node[below] {$\zb$} (2.-35)
  	(1) edge node[above] {$\zg$} (3);
   \end{tikzpicture}}} 
\]
of wild representation type.
Then by linear algebra or more sophisticated methods, it can be computed that
\begin{equation}
A_Q(\dd,q) = q^2 + 2q + 2 ,
\end{equation}
(here 2 is a ``bad'' characteristic). So we have that $A_Q(\dd,1) = 5$, and it is easy to check that $TM_Q(\dd) =5$ by {\it ad hoc} methods.
We do indeed get a decomposition of $\ind(Q, \dd)$ into one 2-cell, two 1-cells, and two 0-cells as predicted by Kac, 
and furthermore we can choose this decomposition so that the representation in the ``center'' of each cell (taking the parameters equal to 0) is a tree module.  
One such choice is given here:

\begin{align}\label{eq:bigcellexample}
\vcenter{\hbox{
\begin{tikzpicture}[point/.style={shape=circle,fill=black,scale=.5pt,outer sep=3pt},>=latex]
  \node[point] (2a) at (0,1) {};
  \node[point] (2b) at (0,-1) {};
  \node[point] (3a) at (2,1) {};
  \node[point] (3b) at (2,-1) {};
  \node[point] (1) at (4,0) {};  
  \path[->]
  	(3a) edge node[above] {$\zb$} (2a) 
  	(3b) edge node[below] {$\zb$} (2b)
  	(3a) edge node[below] {$\za$} (2b)
  	(1) edge node[above] {$\zg$} (3a);
   \end{tikzpicture}}} \qquad
&\vcenter{\hbox{
\begin{tikzpicture}[point/.style={shape=circle,fill=black,scale=.5pt,outer sep=3pt},>=latex]
  \node (2) at (0,0) {$\C^2$};
  \node (3) at (2,0) {$\C^2$};
  \node (1) at (4,0) {$\C$};
  \path[->]
  	(3.165) edge node[above] {$\twobytwo{\lambda}{\mu}{1}{0}$} (2.15) 
  	(3.-165) edge node[below] {$\twobytwo{1}{0}{0}{1}$} (2.-15)
  	(1) edge node[above] {$\twobyone{1}{0}$} (3);
   \end{tikzpicture}}} 
 \qquad (\lambda, \mu) \in \C^2\\
\vcenter{\hbox{
\begin{tikzpicture}[point/.style={shape=circle,fill=black,scale=.5pt,outer sep=3pt},>=latex]
  \node[point] (2a) at (0,1) {};
  \node[point] (2b) at (0,-1) {};
  \node[point] (3a) at (2,1) {};
  \node[point] (3b) at (2,-1) {};
  \node[point] (1) at (4,0) {};  
  \path[->]
  	(3a) edge node[above] {$\zb$} (2a) 
  	(3b) edge node[below] {$\zb$} (2b)
  	(3a) edge node[below] {$\za$} (2b)
  	(1) edge node[above] {$\zg$} (3b);
   \end{tikzpicture}}} \qquad
&\vcenter{\hbox{
\begin{tikzpicture}[point/.style={shape=circle,fill=black,scale=.5pt,outer sep=3pt},>=latex]
  \node (2) at (0,0) {$\C^2$};
  \node (3) at (2,0) {$\C^2$};
  \node (1) at (4,0) {$\C$};  
  \path[->]
  	(3.165) edge node[above] {$\twobytwo{\lambda}{0}{1}{\lambda}$} (2.15) 
  	(3.-165) edge node[below] {$\twobytwo{1}{0}{0}{1}$} (2.-15)
  	(1) edge node[above] {$\twobyone{0}{1}$} (3);
   \end{tikzpicture}}} 
 \qquad \lambda \in \C \\
\vcenter{\hbox{
\begin{tikzpicture}[point/.style={shape=circle,fill=black,scale=.5pt,outer sep=3pt},>=latex]
  \node[point] (2a) at (0,1) {};
  \node[point] (2b) at (0,-1) {};
  \node[point] (3a) at (2,1) {};
  \node[point] (3b) at (2,-1) {};
  \node[point] (1) at (4,0) {};  
  \path[->]
  	(3a) edge node[above] {$\zb$} (2a) 
  	(3b) edge node[below] {$\za$} (2b)
  	(1) edge node[above] {$\zg$} (3a)
  	(1) edge node[above] {$\zg$} (3b);
   \end{tikzpicture}}} \qquad
&\vcenter{\hbox{
\begin{tikzpicture}[point/.style={shape=circle,fill=black,scale=.5pt,outer sep=3pt},>=latex]
  \node (2) at (0,0) {$\C^2$};
  \node (3) at (2,0) {$\C^2$};
  \node (1) at (4,0) {$\C$};
  \path[->]
  	(3.165) edge node[above] {$\twobytwo{\lambda}{0}{0}{1}$} (2.15) 
  	(3.-165) edge node[below] {$\twobytwo{1}{0}{0}{0}$} (2.-15)
  	(1) edge node[above] {$\twobyone{1}{1}$} (3);
   \end{tikzpicture}}} 
 \qquad \lambda \in \C \\
\vcenter{\hbox{
\begin{tikzpicture}[point/.style={shape=circle,fill=black,scale=.5pt,outer sep=3pt},>=latex]
  \node[point] (2a) at (0,1) {};
  \node[point] (2b) at (0,-1) {};
  \node[point] (3a) at (2,1) {};
  \node[point] (3b) at (2,-1) {};
  \node[point] (1) at (4,0) {};  
  \path[->]
  	(3a) edge node[above] {$\za$} (2a) 
  	(3b) edge node[below] {$\za$} (2b)
  	(3a) edge node[below] {$\zb$} (2b)
  	(1) edge node[above] {$\zg$} (3b);
   \end{tikzpicture}}} \qquad
&\vcenter{\hbox{
\begin{tikzpicture}[point/.style={shape=circle,fill=black,scale=.5pt,outer sep=3pt},>=latex]
  \node (2) at (0,0) {$\C^2$};
  \node (3) at (2,0) {$\C^2$};
  \node (1) at (4,0) {$\C$};
  \path[->]
  	(3.165) edge node[above] {$\twobytwo{1}{0}{0}{1}$} (2.15) 
  	(3.-165) edge node[below] {$\twobytwo{0}{0}{1}{0}$} (2.-15)
  	(1) edge node[above] {$\twobyone{0}{1}$} (3);
   \end{tikzpicture}}} \\
\vcenter{\hbox{
\begin{tikzpicture}[point/.style={shape=circle,fill=black,scale=.5pt,outer sep=3pt},>=latex]
  \node[point] (2a) at (0,1) {};
  \node[point] (2b) at (0,-1) {};
  \node[point] (3a) at (2,1) {};
  \node[point] (3b) at (2,-1) {};
  \node[point] (1) at (4,0) {};  
  \path[->]
  	(3a) edge node[above] {$\za$} (2a) 
  	(3b) edge node[below] {$\za$} (2b)
  	(3a) edge node[below] {$\zb$} (2b)
  	(1) edge node[above] {$\zg$} (3a);
   \end{tikzpicture}}} \qquad
&\vcenter{\hbox{
\begin{tikzpicture}[point/.style={shape=circle,fill=black,scale=.5pt,outer sep=3pt},>=latex]
  \node (2) at (0,0) {$\C^2$};
  \node (3) at (2,0) {$\C^2$};
  \node (1) at (4,0) {$\C$};
  \path[->]
  	(3.165) edge node[above] {$\twobytwo{1}{0}{0}{1}$} (2.15) 
  	(3.-165) edge node[below] {$\twobytwo{0}{0}{1}{0}$} (2.-15)
  	(1) edge node[above] {$\twobyone{1}{0}$} (3);
   \end{tikzpicture}}} 
\end{align}
\end{example}

\begin{example}\label{eg:s2}
A description of $\ind(S_2, 2)$ is worked out carefully in \cite[\S6]{LeBruyn1986d} and also in \cite[\S6.2]{MR1648601}. We have $A_{S_2}(2,q) = q^5 + q^3$ so expect two cells of indecomposable representations.  After reducing to the case where the trace of each endomorphism is 0 by removing an affine factor of $\mathbb{A}^2$, Lebruyn gives the decomposition of $\ind(S_2, 2)$ into cells:
\begin{equation}
C_1 = \left\{\text{simple representations}\right\}\coprod
 \left\{\vcenter{\hbox{
\begin{tikzpicture}
		 \node (3) at (0,0) {$\C^2$} edge[in=135,out=205,loop] node[left] {$\twobytwo{a}{1}{0}{-a}$} () edge[in=45,out=-25,loop] node[right] {$\twobytwo{b}{1}{0}{-b}$} ();
		 \end{tikzpicture} }},  \quad a, b \in \C \right\}
\end{equation}
\begin{equation}
C_2 = 
 \left\{\vcenter{\hbox{
\begin{tikzpicture}
		 \node (3) at (0,0) {$\C^2$} edge[in=135,out=205,loop] node[left] {$\twobytwo{0}{a}{0}{0}$} () edge[in=45,out=-25,loop] node[right] {$\twobytwo{0}{1-a}{0}{0}$} ();
		 \end{tikzpicture} }},  \quad a \in \C \right\} .
\end{equation}
In this case there are two tree modules but both lie in the same cell $C_2$, at $a=0$ and $a=1$.  If we instead use the orbit representatives
\begin{equation}
C'_1 = \left\{\text{simple representations}\right\}\coprod
 \left\{\vcenter{\hbox{
\begin{tikzpicture}
		 \node (3) at (0,0) {$\C^2$} edge[in=135,out=205,loop] node[left] {$\twobytwo{a}{1}{0}{-a}$} () edge[in=45,out=-25,loop] node[right] {$\twobytwo{b}{a}{0}{-b}$} ();
		 \end{tikzpicture} }},  \quad a, b \in \C \right\}
\end{equation}
\begin{equation}
C'_2 = 
 \left\{\vcenter{\hbox{
\begin{tikzpicture}
		 \node (3) at (0,0) {$\C^2$} edge[in=135,out=205,loop] node[left] {$\twobytwo{0}{a}{0}{0}$} () edge[in=45,out=-25,loop] node[right] {$\twobytwo{0}{1}{0}{0}$} ();
		 \end{tikzpicture} }},  \quad a \in \C \right\} ,
\end{equation}
we get a tree module at the center of each cell, like the previous example.
\end{example}

It would be interesting to know under what conditions on $Q$ and $\dd$ one may find a cell decomposition of (some choice of) $\ind(Q,\dd)$ with a tree module in each cell.  It is straightforward to do this whenever $Q$ is affine Dynkin type and $\dd$ is an imaginary root, using explicit matrix presentations.  Of course the dream is to find a torus action on (some choice of) $\ind(Q,\dd)$ for which tree modules are fixed points.

\subsection*{Acknowledgments}  I am indebted to Ralf Schiffler for mentorship, encouragement, and many good conversations about representation theory while I pursued these ideas.  I would also like to thank Ed Green for explaining his work on group graded algebras to me, and explaining how to put Proposition \ref{prop:covers} in the appropriate generality here.  Anonymous referees also provided helpful feedback, and the open-source software system SAGE was instrumental in exploratory calculations.

\bibliographystyle{kinser-alpha}
\bibliography{ryanbiblio}

\end{document}